\theoremstyle{plain}
\newtheorem{theorem}{Theorem}
\newtheorem{proposition}[theorem]{Proposition}
\theoremstyle{definition}
\newtheorem{definition}[theorem]{Definition}
\newtheorem{remark}[theorem]{Remark}
\newtheorem{example}[theorem]{Example}
\begin{document}
\title[Bounded bigroup homomorphisms]
{Topological modulus of bounded bigroup homomorphisms on a topological module}
\author{Omid Zabeti}
\address{O. Zabeti, Department of Mathematics, Faculty of Mathematics, University of Sistan and Baluchestan,  P.O. Box 98135-674, Zahedan (Iran)}
\email{o.zabeti@gmail.com}
\subjclass[2010]{54H13, 16W80, 13J99.} \keywords{Bigroup homomorphism; topological module; boundedness}
\maketitle
\begin{abstract}
Let X, Y, and Z be  topological modules over a topological ring $R$. In this paper, we introduce three different classes of bounded bigroup homomorphisms from $X\times Y$ into $Z$ with respect to the three different uniform convergence topologies. We show that the operations of addition and module multiplication are continuous for each class of bounded bigroup homomorphisms. Also, we investigate whether each class of bounded bigroup homomorphisms is uniformly complete.
\end{abstract}
\section{introduction and preliminaries}
In \cite{Mi}, it has been introduced some notions for bounded group homomorphisms on a topological ring. Also, it has been proved that each class of bounded group homomorphisms on a topological ring, with respect to an appropriate topology, forms a topological ring. Since every topological ring can be viewed as a topological module over itself, it is routine to see that we can consider the concepts of topological modules of bounded group homomorphisms on a topological module. In fact, the results in \cite{Mi}, can be generalized to topological modules in a natural way. Recall that a \textit{topological module} $X$ is a module over a topological ring $R$ such that the addition ( as a map from $X\times X$ into $X$), and the multiplication ( as a map from $R\times X$ into $X$) are continuous. There are many examples of topological modules, for instance, every topological vector space is a topological module over a topological field, every abelian topological group is a topological module over $\Bbb Z$, where $\Bbb Z$ denotes the ring of integers with discrete topology, and also every topological ring is a topological module over each of its subrings. So, it is of independent interest if we consider possible relations between algebraic structures of a module and its topological properties.  In the present paper, we are going to consider bounded bigroup homomorphisms between topological modules. We endow each class of bounded bigroup homomorphisms to a uniform convergence topology and we show that under the assumed topology, each class of them, forms a topological module. At the end, we see that if each class of bounded bigroup homomorphisms is uniformly complete. In the following, by a bigroup homomorphism on Cartesian product $X\times Y$, we mean a map which is group homomorphism on $X$ and $Y$, respectively. Also, note that if $X$ is a topological module over topological ring $R$, then, $B\subseteq X$ is said to be bounded if for each zero neighborhood $W\subseteq X$, there exists zero neighborhood $V\subseteq R$ such that $RB\subseteq W$. For more information about topological modules, topological rings and the related notions, see \cite{Arnautov, Shell, Tr, Omid}.
\section{bounded bigroup homomorphisms}
\begin{definition}
Let $X$, $Y$, and $Z$ be topological modulus over a topological ring $R$. A bigroup homomorphism $\sigma:X \times Y \to Z$ is said to be:
\begin{itemize}
\item[\em i.]{$n$-bounded if there exist some zero neighborhoods $U\subseteq X$ and $V\subseteq Y$ such that  $\sigma(U,V)$ is bounded in $Z$}.
\item[\em ii.]{${\frac{n}{2}}$-bounded if there exists a zero neighborhood $U\subseteq X$ such that for each bounded set $B\subseteq Y$ $\sigma(U,B)$ is bounded in $Z$}.
\item[\em iii.]{$b$-bounded if for every bounded subsets $ B_1\subseteq X$ and $B_2\subseteq Y$, $\sigma(B_1,B_2)$ is bounded in $Z$}.
\end{itemize}
\end{definition}
The first point is that these concepts of bounded bigroup homomorphisms are far from being equivalent. In prior to anything, we show this.
\begin{example}\label{1}
Let $X={\Bbb R}^{\Bbb N}$, the space of all real sequences, with
the coordinate-wise topology and the pointwise product. It is easy to see that $X$ is a topological module over itself. Consider
the bigroup homomorphism $\sigma:X\times X\to X$ defined by $\sigma(x,y)=xy$, in which the product is given by pointwise. It is not difficult to see that $\sigma$ is $b$-bounded but since $X$ is not locally bounded, it can not be $n$-bounded.
\end{example}
Also, the above example may apply to determine a $b$-bounded bigroup homomorphism which is not $\frac{n}{2}$-bounded.
\begin{example}
Let $X$ be $l^{\infty}$, the space of all bounded real sequences, with the topology induced by uniform norm and pointwise product. Suppose $Y$ is  $l^{\infty}$, with the coordinate-wise topology and pointwise product. Consider the bigroup homomorphism $\sigma$ from $X \times Y$ to $Y$ as in Example \ref{1}. It is easy to see that $\sigma$ is $\frac{n}{2}$-bounded but it is not $n$-bounded. For, suppose $\varepsilon>0$ is arbitrary. Assume that $N_{\varepsilon}^{(0)}$ is the ball with centre zero and radius $\varepsilon$ in $X$. If $U$ is an arbitrary zero neighborhood in $Y$, with out loss of generality, we may assume that $U$ is of the form
\[(-\varepsilon_1,\varepsilon_1)\times\ldots\times(-\varepsilon_r,\varepsilon_r)\times \Bbb R\times \Bbb R\times\ldots,\]
in which, $\varepsilon_i>0$. Fix $0<\delta<\min\{\varepsilon_i\}$. Consider the sequence $(a_n)\subseteq U$ defined by $a_n=(\delta,\ldots,\delta,r+1,\ldots,r+n,o,\ldots)$ for $n>r$ and zero for $n\leq r$. Now, it is not difficult to see that $\sigma(N_{\varepsilon}^{(0)},(a_n))$ can not be a bounded subset of $Y$.
\end{example}
\begin{example}\label{2}
Let $X$ be $\l^{\infty}$, with pointwise product and the uniform norm topology, and $Y$ be $\l^{\infty}$, with the zero multiplication and the topology induced by norm. Consider $\sigma$ from $X\times Y$ to $X$ as in Example \ref{1}. Then, $\sigma$ is $n$-bounded but it is not $\frac{n}{2}$-bounded. For, suppose $\varepsilon>0$ is arbitrary. Consider the sequence $(a_n)$ in $Y$ defined by $a_n=(\frac{1}{\varepsilon},\ldots,\frac{n}{\varepsilon},0,\ldots)$. $(a_n)$ is bounded in $Y$ but $\sigma(N_{\varepsilon}^{(0)},(a_n))$ contains the sequence $(1,\ldots,n,0,\ldots)$ which is not bounded in $X$.
\end{example}
Since topological modules are topological spaces, we can consider the concept of jointly continuity for a bigroup homomorphism between
topological modulus. The interesting result in this case is that there is no relation between jointly continuous bigroup homomorphisms and bounded ones.
To see this, consider the following examples.
\begin{example}
Let $X$ be $\l^{\infty}$, with the pointwise product and coordinate-wise topology, and $Y$ be $\l^{\infty}$, with the zero multiplication and uniform norm topology. Consider the bigroup homomorphism $\sigma$ from $X\times X$ into $Y$ as in Example \ref{1}. Indeed, $\sigma$ is $b$-bounded and $n$-bounded but it is easy to see that $\sigma$ can not be jointly continuous.
\end{example}
The class of all $n$-bounded bigroup homomorphisms on a topological module $X$ is denoted by $B_{n}(X\times X)$ and is equipped with the topology of uniform
convergence on some zero neighborhoods, namely, a net $(\sigma_{\alpha})$ of $n$-bounded bigroup homomorphisms converges uniformly to zero on some zero neighborhoods
$U, V \subseteq X$ if for each zero neighborhood $W\subseteq X$ there is an $\alpha_0$
with $\sigma_{\alpha}(U,V)\subseteq W$ for each $\alpha\geq\alpha_0$.
The set of all $\frac{n}{2}$-bounded bigroup homomorphisms on a topological module $X$ is denoted by $B_{\frac{n}{2}}(X\times X)$ and it is assigned
with the topology of $\sigma$-uniformly convergence on some zero neighborhood. We say that a net $(\sigma_{\alpha})$ of $\frac{n}{2}$-bounded bigroup homomorphisms converges $\sigma$-uniformly to zero on some zero neighborhood if there exists a zero neighborhood $U\subseteq X$ such that for each zero neighborhood $W\subseteq X$ and each bounded set $B\subseteq X$ there is an $\alpha_0$ with  $\sigma_{\alpha}(U,B)\subseteq W$ for each $\alpha\geq\alpha_0$. Finally, the class of all $b$-bounded bigroup homomorphisms on a topological module $X$ is denoted by $B_{b}(X\times X)$ and is endowed with the topology of uniform convergence on bounded sets which means a net $(\sigma_{\alpha})$ of $b$-bounded bigroup homomorphisms converges uniformly to zero on bounded sets $B_1,B_2\subseteq X$ if for each zero neighborhood $W\subseteq X$ there is an $\alpha_0$
with $\sigma_{\alpha}(B_1,B_2)\subseteq W$ for each $\alpha\geq\alpha_0$.
In this part of the paper, we show that the operations of addition and module multiplication are continuous in each of the topological modules $B_n(X\times X), B_{\frac{n}{2}}(X\times X)$, and $B_b(X\times X)$ with respect to the assumed topology, respectively. So, each of them forms a topological $R$-module.
\begin{theorem}
The operations of addition and module multiplication in $B_{n}(X\times X)$ are continuous with respect to the topology of uniform convergence on some zero neighborhoods.
\end{theorem}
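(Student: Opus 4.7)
The plan is to verify the two continuity statements separately, reducing each to convergence of a suitable difference to zero in the uniform convergence topology on a common pair $(U,V)$ of zero neighborhoods in $X$. Throughout, I will exploit the same mechanisms used for ordinary topological modules: continuity of addition in $Z$, continuity of the scalar multiplication $R\times Z\to Z$, and the definition of boundedness recalled in the introduction. A preliminary routine step, which I would dispatch quickly, is to check that $B_n(X\times X)$ is closed under addition and scalar multiplication: if $\sigma,\tau$ are $n$-bounded with witnesses $(U_1,V_1)$ and $(U_2,V_2)$, then on $U_1\cap U_2$ and $V_1\cap V_2$ the sum lands in $\sigma(U_1,V_1)+\tau(U_2,V_2)$, a sum of bounded sets; and for $r\in R$, the set $r\sigma(U_1,V_1)$ is bounded because for any zero neighborhood $W\subseteq Z$ continuity of scalar multiplication in $Z$ yields $W'$ with $rW'\subseteq W$, after which a zero neighborhood $V_R\subseteq R$ with $V_R\sigma(U_1,V_1)\subseteq W'$ combined with continuity of multiplication in $R$ produces $V_R'\subseteq R$ with $V_R'r\subseteq V_R$, hence $V_R'(r\sigma(U_1,V_1))\subseteq W$.

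For continuity of addition, suppose $(\sigma_\alpha)\to 0$ uniformly on $(U_1,V_1)$ and $(\tau_\beta)\to 0$ uniformly on $(U_2,V_2)$. Set $U=U_1\cap U_2$ and $V=V_1\cap V_2$. Given a zero neighborhood $W\subseteq Z$, use continuity of addition in $Z$ to pick a zero neighborhood $W'$ with $W'+W'\subseteq W$. Choose $\alpha_0,\beta_0$ so that $\sigma_\alpha(U,V)\subseteq W'$ for $\alpha\ge\alpha_0$ and $\tau_\beta(U,V)\subseteq W'$ for $\beta\ge\beta_0$; then $(\sigma_\alpha+\tau_\beta)(U,V)\subseteq W$ for $(\alpha,\beta)\ge(\alpha_0,\beta_0)$. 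Reducing the general case to the zero limit by writing $(\sigma_\alpha+\tau_\beta)-(\sigma+\tau)=(\sigma_\alpha-\sigma)+(\tau_\beta-\tau)$ then finishes this half.

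For continuity of scalar multiplication, I would use the decomposition
\[
r_\alpha\sigma_\alpha-r\sigma=(r_\alpha-r)(\sigma_\alpha-\sigma)+r(\sigma_\alpha-\sigma)+(r_\alpha-r)\sigma,
\]
and verify that each of the three summands converges to zero in the uniform convergence topology whenever $r_\alpha\to r$ in $R$ and $\sigma_\alpha\to\sigma$ in $B_n(X\times X)$. Let $(U,V)$ be the common pair of zero neighborhoods for the convergence $\sigma_\alpha\to\sigma$, and fix a zero neighborhood $W\subseteq Z$. For the first term, continuity of $R\times Z\to Z$ at $(0,0)$ yields a zero neighborhood $V_R\subseteq R$ and a zero neighborhood $W'\subseteq Z$ with $V_R W'\subseteq W$; eventually $r_\alpha-r\in V_R$ and $(\sigma_\alpha-\sigma)(U,V)\subseteq W'$, giving $(r_\alpha-r)(\sigma_\alpha-\sigma)(U,V)\subseteq W$. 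For the second, continuity of multiplication by the fixed element $r$ at $0\in Z$ furnishes $W''$ with $rW''\subseteq W$, and eventually $(\sigma_\alpha-\sigma)(U,V)\subseteq W''$. For the third, I use that $\sigma$ is $n$-bounded: pick $(U_\sigma,V_\sigma)$ making $\sigma(U_\sigma,V_\sigma)$ bounded, shrink to work on $U\cap U_\sigma$ and $V\cap V_\sigma$, and then boundedness produces a zero neighborhood $\widetilde V_R\subseteq R$ with $\widetilde V_R\sigma(U_\sigma,V_\sigma)\subseteq W$; once $r_\alpha-r\in\widetilde V_R$ we are done.

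The only delicate point, and what I expect to be the main (though still mild) obstacle, is the third summand above: it is where the bigroup homomorphism's $n$-boundedness is genuinely used, and it requires choosing a refined pair of zero neighborhoods compatible with both the convergence witnesses $(U,V)$ and the boundedness witnesses $(U_\sigma,V_\sigma)$. Everything else is a direct translation of the standard $\varepsilon/2$ arguments into the language of zero neighborhoods and bounded sets, so once that bookkeeping is in place the theorem follows.
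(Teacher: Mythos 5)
Your proof is correct, and for the addition half it coincides with the paper's argument (reduce to a net converging to zero, split $W$ as $W_1+W_1\subseteq W$, take a common index). Where you genuinely diverge is in the module multiplication: the paper only verifies joint continuity at the origin --- it takes $r_\alpha\to 0$ \emph{and} $\sigma_\alpha\to 0$ and shows $r_\alpha\sigma_\alpha\to 0$ via a single choice of $V_1W_2\subseteq W$ --- whereas you prove continuity at an arbitrary point $(r,\sigma)$ through the decomposition $r_\alpha\sigma_\alpha-r\sigma=(r_\alpha-r)(\sigma_\alpha-\sigma)+r(\sigma_\alpha-\sigma)+(r_\alpha-r)\sigma$. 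Unlike addition, where continuity at zero yields continuity everywhere by translation, scalar multiplication is not a group operation, so the origin case alone does not formally establish the theorem as stated; your third summand, handled via the $n$-boundedness of $\sigma$ on a refined pair $(U\cap U_\sigma, V\cap V_\sigma)$, is exactly the step the paper's proof omits, and it is also the only place where $n$-boundedness of the limit is actually used. You additionally check that $B_n(X\times X)$ is closed under the two operations (sums and scalar multiples of $n$-bounded maps are $n$-bounded), which the paper leaves implicit. In short: your argument is a strict strengthening of the paper's, at the cost of some extra bookkeeping with intersected neighborhoods; the paper's version is shorter but only covers the special case of convergence to zero.
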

\begin{proof}
Suppose two nets $(\sigma_{\alpha})$ and $(\gamma_{\alpha})$  of $n$-bounded bigroup homomorphisms converge to zero uniformly on some zero neighborhoods $(U,V)\subseteq X$. Let $W$ be an arbitrary
zero neighborhood in $X$. So, there is a zero neighborhood
$W_1$ with $ W_1+W_1 \subseteq W$. There are some $\alpha_0$ and $\alpha_1$ such that $\sigma_{\alpha}(U,V) \subseteq W_1$ for each $\alpha\geq\alpha_0$ and $\gamma_{\alpha}(U,V) \subseteq W_1$ for each $\alpha\geq\alpha_1$. Choose an $\alpha_2$ with $\alpha_2\geq \alpha_0$ and $\alpha_2\geq\alpha_1$. If $\alpha\geq\alpha_2$ then $(\sigma_{\alpha}+\gamma_{\alpha})(U,V) \subseteq \sigma_{\alpha}(U,V)+\gamma_{\alpha}(U,V) \subseteq W_1+W_1 \subseteq W$. Thus, the addition is continuous.
Now, we show the continuity of the module multiplication. Suppose $(r_{\alpha})$ is a net in $R$ which is convergent to zero. There are some neighborhoods $V_1\subseteq R$ and $W_2\subseteq X$ such that $V_1W_2\subseteq W$. Find an $\alpha_3$ with $\gamma_{\alpha}(U,V) \subseteq W_2$ for each $\alpha\geq \alpha_3$. Take an $\alpha_4$ such that $(r_{\alpha})\subseteq V_1$ for each $\alpha\geq \alpha_4$. Choose an $\alpha_5$ with $\alpha_5\geq\alpha_3$ and $\alpha_5\geq \alpha_4$. If $\alpha \geq \alpha_5$ then $r_{\alpha}\sigma_{\alpha}(U,V)\subseteq V_1W_2\subseteq W$, as asserted.
\end{proof}
\begin{theorem}
The operations of addition and module multiplication in $B_{\frac{n}{2}}(X\times X)$ are continuous with respect to the topology of $\sigma$-uniform convergence on some zero neighborhood.
\end{theorem}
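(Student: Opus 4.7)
The plan is to mirror the proof of the previous theorem, replacing the second zero neighborhood $V \subseteq X$ by an arbitrary bounded set $B \subseteq X$, and taking care that the witness zero neighborhood $U$ in the definition of $\sigma$-uniform convergence is existentially quantified (so two nets need not come with the same witness).

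For continuity of addition, I would take nets $(\sigma_\alpha)$ and $(\gamma_\alpha)$ in $B_{\frac{n}{2}}(X\times X)$ that converge $\sigma$-uniformly to zero on witness zero neighborhoods $U_1$ and $U_2$, respectively. Setting $U = U_1 \cap U_2$, monotonicity of bigroup homomorphisms in the first argument gives $\sigma_\alpha(U,B) \subseteq \sigma_\alpha(U_1,B)$ and $\gamma_\alpha(U,B) \subseteq \gamma_\alpha(U_2,B)$ for every bounded $B$, so $U$ is a common witness. Then, given an arbitrary zero neighborhood $W \subseteq X$ and an arbitrary bounded set $B \subseteq X$, I choose $W_1$ with $W_1 + W_1 \subseteq W$, pick $\alpha_0, \alpha_1$ with $\sigma_\alpha(U,B) \subseteq W_1$ for $\alpha \geq \alpha_0$ and $\gamma_\alpha(U,B) \subseteq W_1$ for $\alpha \geq \alpha_1$, and any $\alpha_2$ dominating both yields $(\sigma_\alpha + \gamma_\alpha)(U,B) \subseteq W_1 + W_1 \subseteq W$ for $\alpha \geq \alpha_2$.

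For continuity of module multiplication, I would take a net $(r_\alpha)$ in $R$ converging to zero and keep $(\sigma_\alpha)$ converging $\sigma$-uniformly to zero on a witness $U$. Continuity of scalar multiplication $R \times X \to X$ at the origin supplies zero neighborhoods $V_1 \subseteq R$ and $W_2 \subseteq X$ with $V_1 W_2 \subseteq W$. Picking $\alpha_3$ with $\sigma_\alpha(U,B) \subseteq W_2$ for $\alpha \geq \alpha_3$, $\alpha_4$ with $r_\alpha \in V_1$ for $\alpha \geq \alpha_4$, and any $\alpha_5$ dominating both, I obtain $(r_\alpha \sigma_\alpha)(U,B) \subseteq V_1 W_2 \subseteq W$ for $\alpha \geq \alpha_5$.

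The only real subtlety I anticipate is the opening step of reconciling the two witness neighborhoods when summing nets; the existential quantifier in the definition of $\sigma$-uniform convergence makes this slightly less automatic than in the $n$-bounded case, where the pair $(U,V)$ is already assumed common. Once the intersection trick is justified by the monotonicity of bigroup homomorphisms on subsets of their first coordinate, everything else is a direct transcription of the $n$-bounded argument with an arbitrary bounded set $B$ replacing the fixed neighborhood $V$.
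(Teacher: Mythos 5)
Your proof is correct and takes essentially the same route as the paper's, which is a verbatim transcription of the $n$-bounded argument with the fixed pair $(U,V)$ replaced by $(U,B)$ for a fixed bounded set $B$ and an arbitrary zero neighborhood $W$. The one place you go beyond the paper is the opening step of intersecting the two witness neighborhoods $U_1\cap U_2$: the paper silently assumes both nets share the same witness $U$, whereas you justify the reduction correctly via monotonicity of images under shrinking the first argument, so your version is, if anything, slightly more careful.
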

\begin{proof}
Suppose two nets $(\sigma_{\alpha})$ and $(\gamma_{\alpha})$  of ${\frac{n}{2}}$-bounded bigroup homomorphisms converge to zero $\sigma$-uniformly on some zero neighborhood $U\subseteq X$. Fix a bounded set $B\subseteq X$. Let $W$ be an arbitrary
zero neighborhood in $X$. So, there is a zero neighborhood
$W_1$ with $ W_1+W_1 \subseteq W$. There are some $\alpha_0$ and $\alpha_1$ such that $\sigma_{\alpha}(U,B) \subseteq W_1$ for each $\alpha\geq\alpha_0$ and $\gamma_{\alpha}(U,B) \subseteq W_1$ for each $\alpha\geq\alpha_1$. Choose an $\alpha_2$ with $\alpha_2\geq \alpha_0$ and $\alpha_2\geq\alpha_1$. If $\alpha\geq\alpha_2$ then $(\sigma_{\alpha}+\gamma_{\alpha})(U,B) \subseteq \sigma_{\alpha}(U,B)+\gamma_{\alpha}(U,B) \subseteq W_1+W_1 \subseteq W$. Thus, the addition is continuous.
Now, we show the continuity of the module multiplication. Suppose $(r_{\alpha})$ is a net in $R$ which is convergent to zero. There are some neighborhoods $V_1\subseteq R$ and $W_2\subseteq X$ such that $V_1W_2\subseteq W$. Find an $\alpha_3$ with $\gamma_{\alpha}(U,B) \subseteq W_2$ for each $\alpha\geq \alpha_3$. Take an $\alpha_4$ such that $(r_{\alpha})\subseteq V_1$ for each $\alpha\geq \alpha_4$. Choose an $\alpha_5$ with $\alpha_5\geq\alpha_3$ and $\alpha_5\geq \alpha_4$. If $\alpha \geq \alpha_5$ then $r_{\alpha}\sigma_{\alpha}(U,B)\subseteq V_1W_2\subseteq W$, as we wanted.
\end{proof}
\begin{theorem}
The operations of addition and module multiplication in $B_{b}(X\times X)$ are continuous with respect to the topology of uniform convergence on bounded sets.
\end{theorem}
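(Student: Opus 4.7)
The plan is to mirror exactly the structure of the two preceding theorems, replacing the pair of fixed zero neighborhoods $(U,V)$ (or the neighborhood-bounded pair $(U,B)$) by an arbitrary pair of bounded sets $(B_1, B_2) \subseteq X \times X$. The convergence in $B_b(X\times X)$ is indexed over all such pairs, so I must check the required estimate on each fixed pair.

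For the continuity of addition, I would start with two nets $(\sigma_\alpha)$ and $(\gamma_\alpha)$ in $B_b(X\times X)$ converging uniformly to zero on bounded sets. Fix arbitrary bounded sets $B_1, B_2 \subseteq X$ and an arbitrary zero neighborhood $W \subseteq X$. By continuity of addition at $0$ in $X$, choose a zero neighborhood $W_1$ with $W_1 + W_1 \subseteq W$. By the assumed convergence, there exist $\alpha_0, \alpha_1$ such that $\sigma_\alpha(B_1,B_2) \subseteq W_1$ for $\alpha \ge \alpha_0$ and $\gamma_\alpha(B_1,B_2) \subseteq W_1$ for $\alpha \ge \alpha_1$. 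Picking $\alpha_2$ majorizing both indices yields $(\sigma_\alpha+\gamma_\alpha)(B_1,B_2) \subseteq \sigma_\alpha(B_1,B_2)+\gamma_\alpha(B_1,B_2) \subseteq W_1+W_1 \subseteq W$ for all $\alpha \ge \alpha_2$, which is the desired uniform convergence on $(B_1,B_2)$.

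For the continuity of module multiplication, I would let $(r_\alpha)$ be a net in $R$ with $r_\alpha \to 0$ and $(\sigma_\alpha)$ a net in $B_b(X\times X)$ converging uniformly to zero on bounded sets. Again fix $B_1, B_2$ bounded in $X$ and a zero neighborhood $W \subseteq X$. Using continuity of the module action $R \times X \to X$ at $(0,0)$, choose a zero neighborhood $V_1 \subseteq R$ and a zero neighborhood $W_2 \subseteq X$ with $V_1 W_2 \subseteq W$. By assumption, take $\alpha_3$ with $\sigma_\alpha(B_1,B_2) \subseteq W_2$ for $\alpha \ge \alpha_3$ and $\alpha_4$ with $r_\alpha \in V_1$ for $\alpha \ge \alpha_4$. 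For any $\alpha_5$ majorizing both, $\alpha \ge \alpha_5$ gives $r_\alpha \sigma_\alpha(B_1,B_2) \subseteq V_1 W_2 \subseteq W$, as required.

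No serious obstacle is expected: the argument is virtually identical to the two previous theorems, and the only point meriting attention is that every step must be carried out for an arbitrary pair of bounded sets $(B_1,B_2)$, not just a distinguished one, since the topology on $B_b(X\times X)$ quantifies over all such pairs. The use of the neighborhoods $W_1$ and of $V_1, W_2$ relies only on continuity of addition and of scalar multiplication in the topological $R$-module $X$, which hold unconditionally.
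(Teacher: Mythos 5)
Your proposal is correct and follows essentially the same argument as the paper: fix a pair of bounded sets, use $W_1+W_1\subseteq W$ for addition and $V_1W_2\subseteq W$ for the module action, and pick indices majorizing the relevant ones. The only (cosmetic) difference is that you consistently write $\sigma_\alpha$ in the multiplication step where the paper momentarily writes $\gamma_\alpha$; your version is the cleaner one.
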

\begin{proof}
Suppose two nets $(\sigma_{\alpha})$ and $(\gamma_{\alpha})$  of $b$-bounded bigroup homomorphisms converge to zero uniformly on bounded sets. Fix two bounded sets $B_1,B_2\subseteq X$. Let $W$ be an arbitrary
zero neighborhood in $X$. So, there is a zero neighborhood
$W_1$ with $ W_1+W_1 \subseteq W$. There are some $\alpha_0$ and $\alpha_1$ such that $\sigma_{\alpha}(B_1,B_2) \subseteq W_1$ for each $\alpha\geq\alpha_0$ and $\gamma_{\alpha}(B_1,B_2) \subseteq W_1$ for each $\alpha\geq\alpha_1$. Choose an $\alpha_2$ with $\alpha_2\geq \alpha_0$ and $\alpha_2\geq\alpha_1$. If $\alpha\geq\alpha_2$ then $(\sigma_{\alpha}+\gamma_{\alpha})(B_1,B_2) \subseteq \sigma_{\alpha}(B_1,B_2)+\gamma_{\alpha}(B_1,B_2) \subseteq W_1+W_1 \subseteq W$. Thus, the addition is continuous.
Now, we show the continuity of the module multiplication. Suppose $(r_{\alpha})$ is a net in $R$ which is convergent to zero. There are some neighborhoods $V_1\subseteq R$ and $W_2\subseteq X$ such that $V_1W_2\subseteq W$. Find an $\alpha_3$ with $\gamma_{\alpha}(B_1,B_2) \subseteq W_2$ for each $\alpha\geq \alpha_3$. Take an $\alpha_4$ such that $(r_{\alpha})\subseteq V_1$ for each $\alpha\geq \alpha_4$. Choose an $\alpha_5$ with $\alpha_5\geq\alpha_3$ and $\alpha_5\geq \alpha_4$. If $\alpha \geq \alpha_5$ then $r_{\alpha}\sigma_{\alpha}(B_1,B_2)\subseteq V_1W_2\subseteq W$, as asserted.
\end{proof}
In the final part of paper, we investigate whether each class of bounded bigroup homomorphisms is uniformly complete. The answer for $B_b(X\times X)$ is affirmative but for other cases there exist counterexamples.
\begin{remark}\label{3}
The class $B_{n}(X\times X)$ can contain a Cauchy sequence whose limit is not an $n$-bounded bigroup homomorphism.
Let $X={\Bbb R}^{\Bbb N}$, the space of all real sequences, with the coordinate-wise topology and the pointwise product. Define the bigroup homomorphisms $\sigma_n$ on $X$ as follows:
\[\sigma_n(x,y)=(x_1y_1,\ldots,x_ny_n,0,\ldots),\]
in which $x=(x_i)_{i=1}^{\infty}$ and $y=(y_i)_{i=1}^{\infty}$. Each $\sigma_n$ is $n$-bounded. For, if
\[U_n =\{x \in X, |x_j|<1,j=0,1,\ldots,n\},\]
then, $\sigma_n(U_n,U_n)$ is bounded in $X$. Also, $(\sigma_n)$ is a Cauchy sequence in $B_{n}(X\times X)$. Because if $W$ is an arbitrary zero neighborhood in $X$, without loss of generality, we may assume that it is of the form
\[W=(-\varepsilon_1,\varepsilon_1)\times\ldots\times(-\varepsilon_r,\varepsilon_r)\times \Bbb R\times \Bbb R\times\ldots,\]
in which $\varepsilon_i>0$. So, for $m,n >r$; we have $(\sigma_n-\sigma_m)(X,X)\subseteq W$. Also, $(\sigma_n)$ converges uniformly on $(X,X)$ to the bigroup homomorphism $\sigma$ defined by
\[\sigma(x,y)=(x_1y_1,x_2y_2,\ldots).\]
But we have seen in Example \ref{1} that $\sigma$ is not $n$-bounded.
\end{remark}
\begin{remark}
The class $B_{{\frac{n}{2}}}(X\times X)$ can contain a Cauchy sequence whose limit is not an ${\frac{n}{2}}$-bounded bigroup homomorphism.
Let $X$ be $\l^{\infty}$, with the pointwise product and the uniform norm topology, and $Y$ be $\l^{\infty}$, with the zero multiplication and the topology induced by norm. Consider bigroup homomorphisms $\sigma_n$ from $X\times Y$ to $X$ as in Remark \ref{3}. It is not difficult to see that each $\sigma_n$ is  ${\frac{n}{2}}$-bounded. Also, $(\sigma_n)$ is a Cauchy sequence in $B_{{\frac{n}{2}}}(X\times X)$ which is convergent $\sigma$-uniformly on $X$ to the bigroup homomorphism $\sigma$ described in Example \ref{2}, so that it is not an  ${\frac{n}{2}}$-bounded bigroup homomorphism.
\end{remark}
\begin{proposition}
Suppose  a net $(\sigma_{\alpha})$ of $b$-bounded bigroup homomorphisms converges to a bigroup homomorphism uniformly on bounded sets.
Then $\sigma$ is also $b$-bounded.
\end{proposition}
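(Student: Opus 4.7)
The plan is to argue directly from the definition of $b$-boundedness: fix bounded sets $B_1,B_2\subseteq X$ and a zero neighborhood $W\subseteq X$, and produce a zero neighborhood $V\subseteq R$ such that $V\sigma(B_1,B_2)\subseteq W$. The natural decomposition is
\[
\sigma(x,y) \;=\; \bigl(\sigma(x,y)-\sigma_{\alpha_0}(x,y)\bigr)+\sigma_{\alpha_0}(x,y)
\]
for a suitable index $\alpha_0$, so that multiplying by $r\in V$ splits $r\sigma(x,y)$ into a ``tail'' term, which we control by uniform convergence on $B_1,B_2$, and a ``main'' term, which we control by the $b$-boundedness of $\sigma_{\alpha_0}$.

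The concrete steps are as follows. First pick a zero neighborhood $W_1\subseteq X$ with $W_1+W_1\subseteq W$. Next, since the module action is continuous at $(0,0)\in R\times X$, choose a zero neighborhood $V_2\subseteq R$ and a zero neighborhood $W_1'\subseteq X$ with $V_2 W_1'\subseteq W_1$; this is the step where we must avoid the naive attempt to bound $V\cdot W_1$, since a zero neighborhood in a topological module need not itself be bounded. Then apply uniform convergence on bounded sets with the neighborhood $W_1'$ to obtain $\alpha_0$ such that $(\sigma-\sigma_{\alpha_0})(B_1,B_2)\subseteq W_1'$. Using $b$-boundedness of $\sigma_{\alpha_0}$, the set $\sigma_{\alpha_0}(B_1,B_2)$ is bounded in $X$, so there is a zero neighborhood $V_1\subseteq R$ with $V_1\,\sigma_{\alpha_0}(B_1,B_2)\subseteq W_1$. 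Set $V=V_1\cap V_2$.

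Finally, verify the inclusion $V\sigma(B_1,B_2)\subseteq W$: for any $r\in V$, $x\in B_1$, $y\in B_2$,
\[
r\sigma(x,y) \;=\; r\bigl(\sigma(x,y)-\sigma_{\alpha_0}(x,y)\bigr)+r\sigma_{\alpha_0}(x,y)\;\in\; V_2 W_1' + V_1\sigma_{\alpha_0}(B_1,B_2)\;\subseteq\; W_1+W_1\;\subseteq\; W.
\]
Since $B_1$ and $B_2$ were arbitrary bounded sets and $\sigma$ is already a bigroup homomorphism by hypothesis, this shows $\sigma$ is $b$-bounded.

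The main obstacle I anticipate is exactly the point highlighted in the second paragraph: one is tempted to argue that $\sigma(B_1,B_2)\subseteq\sigma_{\alpha_0}(B_1,B_2)+W_1$ with $\sigma_{\alpha_0}(B_1,B_2)$ bounded, and then invoke ``a bounded set plus a zero neighborhood is bounded'' — but this is false in general topological modules, where zero neighborhoods need not be bounded. The correct fix is to absorb the error term using the continuity of scalar multiplication before passing to uniform convergence, which is why $W_1'$ (not $W_1$) is fed into the uniform-convergence step.
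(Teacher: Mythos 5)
Your proof is correct and follows essentially the same route as the paper's: both decompose $\sigma=(\sigma-\sigma_{\alpha_0})+\sigma_{\alpha_0}$, absorb the error term by first choosing $V_2W_1'\subseteq W_1$ via continuity of the module action and only then invoking uniform convergence, and handle the main term by the boundedness of $\sigma_{\alpha_0}(B_1,B_2)$. Your write-up is in fact slightly tidier at the final inclusion (the paper's chain $W_2+V_1W_2\subseteq W_1+W_1$ tacitly assumes $W_2\subseteq W_1$, which your choice of neighborhoods avoids), but the underlying argument is the same.
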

\begin{proof}
Fix  bounded sets $B_1,B_2\subseteq X$. Let $W$ be an arbitrary zero neighborhood in $X$. There is a zero neighborhood $W_1$ such that $W_1+W_1\subseteq W$. Choose a zero neighborhood $V_1\subseteq R$ and a zero neighborhood $W_2\subseteq X$ with $V_1 W_2 \subseteq W_1$. There is an $\alpha_0$ such that $(\sigma_{\alpha}-\sigma)(B_1,B_2) \subseteq W_2$ for each $\alpha\geq\alpha_0$. Fix an $\alpha\geq\alpha_0$. So, there is a zero neighborhood $V_2\subseteq V_1$ with $V_2\sigma_{\alpha}(B_1,B_2) \subseteq W_2$. Therefore,
\[V_2\sigma(B_1,B_2) \subseteq V_2\sigma_{\alpha}(B_1,B_2) + V_2 W_2 \subseteq W_2+V_1W_2 \subseteq W_1+W_1 \subseteq W.\]
\end{proof}


\begin{thebibliography}{99}
\bibitem{Arnautov} V. Arnautov, S. Glavatsky and Alexander A. Mikhalev. Introduction to the theory of topological rings and modules. Pure and applied mathematics. CRC press, 1995.
\bibitem{Mi} M. Mirzavaziri and O. Zabeti. Topological rings of bounded and compact group homomorphisms on
a topological ring. J. Adv. Res. Pure Math., to appear.
\bibitem{Shell} N. Shell. Topological fields and near valuations. Marcel Dekker, Inc. The city university of New York, New York, 1990.
\bibitem{Tr} V. G. Troitsky. Spectral radii of bounded linear operators on topological vector spaces. J. PanAmerican Math., {\bf 11}(2001), no. 3, 1--35.
\bibitem{Omid} O. Zabeti. Topological algebras of bounded operators on topological vector spaces. J. Adv. Res. Pure Math., Vol. 3, Issue. 1, 2011, pp. 22--26. DOI: 10.5373/jarpm.438.052210
\end{thebibliography}
\end{document}